\newcounter{sarrow}
\newtheorem{theorem}{Theorem}
\newtheorem{proposition}[theorem]{Proposition}
\newtheorem{conjecture}[theorem]{Conjecture}
\theoremstyle{definition}
\newtheorem{remark}[theorem]{Remark}
\newtheorem{example}[theorem]{Example}
\theoremstyle{remark}
\def\R{\mathbb{R}}
\theoremstyle{plain}
\begin{document}

\title{Homology spheres bounding acyclic smooth manifolds and symplectic fillings}

\author{John B. Etnyre}

\author{B\"{u}lent Tosun}

\address{Department of Mathematics \\ Georgia Institute of Technology \\ Atlanta \\ Georgia}

\email{etnyre@math.gatech.edu}

\address{Department of Mathematics\\ University of Alabama\\Tuscaloosa\\Alabama}

\email{btosun@ua.edu}

\subjclass[2000]{57R17}

\begin{abstract}
In this paper, we collect various structural results to determine when an integral homology $3$--sphere bounds an acyclic smooth $4$--manifold, and when this can be upgraded to a Stein manifold. In a different direction we study whether smooth embedding of connected sums of lens spaces in $\mathbb{C}^2$ can be upgraded to a Stein embedding, and determined that this never happens.    
\end{abstract}

\maketitle

\section{Introduction}
The problem of embedding one manifold into another has a long, rich history, and proved to be tremendously important for answering various geometric and topological problems. The starting point is the Whitney Embedding Theorem: every compact $n$--dimensional manifold can be smoothly embedded in $\mathbb {R}^{2n}$. 

In this paper we will focus on smooth embeddings of $3$--manifolds into $\R^4$ and embeddings that bound a convex symplectic domain in $(\R^4, \omega_{std})$. One easily sees that if a (rational) homology sphere has such an embedding, then it must bound a (rational) homology ball. Thus much of the paper is focused on constructing or obstructing such homology balls. 

\subsection{Smooth embeddings}

In this setting, an improvement on the Whitney Embedding Theorem, due to Hirsch \cite{Hirsch:embedding} (also see Rokhlin \cite{Rohlin:3manembedding} and Wall \cite{Wall:embedding}), proves that every $3$--manifold embeds in $\mathbb{R}^5$ smoothly. In the smooth category this is the optimal result that works for all $3$--manifolds; for example, it follows from work of Rokhlin that the Poincar\'{e} homology sphere $P$ cannot be embeded in $\mathbb{R}^4$ smoothly.  On the other hand in the topological category one can always find embeddings into $\R^4$ for any integral homology sphere by Freedman's work \cite{Freedman:4manifolds}. Combining the works of Rokhlin \cite{Rohlin:invariant} and Freedman  \cite{Freedman:4manifolds} for $P$ yields an important phenomena in $4$--manifold topology: there exists a closed oriented non-smoothable $4$--manifold --- the so called $E_8$ manifold. In other words, the question of {\it when does a $3$--manifold embeds in $\mathbb{R}^4$ smoothly} is an important question from the point of smooth $4$--manifold topology. This is indeed one of the question in the Kirby's problem list \cite[Problem~$3.20$]{Kirby:problemlist}.  Since the seminal work of Rokhlin in 1952, there has been a great deal of progress towards understanding this question. On the constructive side, Casson-Harrer \cite{CH}, Stern, and Fickle \cite{Fickle} have found many infinite families of integral homology spheres that embeds in $\mathbb{R}^4$.  On the other hand techniques and invariants, mainly springing from Floer and gauge theories, and symplectic geometry \cite{FS:R, Manolescu:T,OS:grading}, have been developed to obstruct smooth embeddings of $3$--manifolds into $\mathbb{R}^4$. It is fair to say that despite these advances and lots of work done in the last seven decades, it is still unclear, for example, which Brieskorn homology spheres embed in $\mathbb{R}^4$ smoothly and which do not.   

A weaker question is whether an integral homology sphere can arise as the boundary of an acyclic $4$--manifold. A manifold that has all its reduced homology trivial is called acyclic. So (rationally) acyclic 4-manifold is the same as (rational) homology ball.  Note that an integral homology sphere that embeds in $\mathbb{R}^4$ necessarily bounds an integral homology ball, and hence is  homology cobordant to the $3$--sphere. Thus a homology cobordism invariant could help to find restrictions, and plenty of such powerful invariants have been developed. For example, for odd $n$, $\Sigma(2,3,6n-1)$ and $\Sigma(2,3,6n+1)$ have non-vanishing Rokhlin invariant. For even $n$, $\Sigma(2,3,6n-1)$ has $R=1$, where $R$ is the invariant of Fintushel and Stern, \cite{FS:R}. Hence none of these families of homology spheres can arise as the boundary of an acyclic manifold. On the other hand, for $\Sigma(2,3,12k+1)$ all the known homology cobordism invariants vanish. Indeed, it is known that $\Sigma(2,3,13)$ \cite{AKi} and $\Sigma(2,3,25)$ \cite{Fickle} bound contractible manifolds of Mazur type. Motivated by the questions and progress mentioned above and view towards their symplectic analogue, we would like to consider some particular constructions of $3$--manifolds bounding acyclic manifolds.        

Our first result is the following, which follows by adapting a method of Fickle. 

\begin{theorem}\label{main1}
Let $K$ be a knot in the boundary of an acyclic, respectively rationally acyclic,  $4$--manifold $W$ which has a genus one Seifert surface $F$ with a primitive element $[b]\in H_1(F)$ such that the curve $b$ is slice in $W.$ If $b$ has $F$--framing $s$, then the homology sphere obtained by $\frac{1}{(s\pm 1)}$ Dehn surgery on $K$ bounds an acyclic, respectively rationally acyclic, $4$--manifold. 
\end{theorem}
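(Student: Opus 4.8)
The plan is to realize $\Sigma$, the homology sphere obtained by $1/(s\pm1)$ surgery on $K$, as the boundary of a $4$--manifold $X$ built from $W$ by a single cancelling $1$--handle/$2$--handle pair, and then to read acyclicity directly off this handle structure. Since $W$ is acyclic its boundary is an integral homology sphere, so $1/(s\pm1)$ surgery on $K\subset\partial W$ does yield a homology sphere, and the rationally acyclic case will run verbatim with $\mathbb{Q}$--coefficients.

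First I would fix a concrete model of the genus one surface, writing $F$ as a disk with two bands whose cores $a$ and $b$ give a basis of $H_1(F)$ with $a\cdot b=1$; the hypothesis that $[b]$ is primitive is exactly what allows $b$ to be completed to such a basis by choosing the dual band core $a$. In this model $K=\partial F$ is a double of the pair $(a,b)$, and the $F$--framing $s$ of $b$ records the twisting carried by the $b$--band. Next I would write a Kirby diagram for $\Sigma$ relative to $W$: by a slam dunk, $1/(s\pm1)$ surgery on $K$ is the boundary of $W$ together with a $0$--framed $2$--handle along $K$ and a $\big(-(s\pm1)\big)$--framed $2$--handle along a meridian $m$ of $K$, a picture whose linking matrix has determinant $-1$.

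The heart of the argument is to collapse this two--handle picture into a cancelling pair by using the slice disk. Since $b$ is slice in $W$ it bounds a smooth disk $D$; after pushing $D$ into the interior it plays exactly the role that the obvious spanning disk of an unknotted curve plays in Fickle's original $S^3=\partial B^4$ construction, namely it is a disk across which a handle can be slid and then cancelled. Adapting his moves, I would use $D$ together with the band structure of $F$ to trade the $0$--framed $2$--handle on $K$ for a Mazur--type pair consisting of one $1$--handle and one $2$--handle that links it geometrically twice but algebraically once; this is the step in which the disk--with--bands model and the intersection $a\cdot b=1$ are essential, since they are what make the relocated attaching circle run over the new $1$--handle exactly once homologically. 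The resulting $X$ is then obtained from $W$ by attaching a cancelling $1$--handle/$2$--handle pair, so that $\widetilde H_*(X)=\widetilde H_*(W)=0$ and $X$ is (rationally) acyclic.

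The step I expect to be the main obstacle is the framing bookkeeping. One must verify that after the slides the surviving $2$--handle is attached with exactly the coefficient making $\partial X$ equal to $1/(s\pm1)$ surgery on $K$, with the two signs $\pm$ corresponding to the two framings with which the handle is slid across $D$, differing by a full twist. Concretely the $F$--framing $s$ must feed into the self--linking of the relocated attaching circle so that the net coefficient on $K$ is $1/(s+1)$ or $1/(s-1)$; this is where the $+1$ versus $-1$ twist along $D$ is used, and the genus one hypothesis is what keeps this to a single clasp rather than a more complicated tangle. Granting this computation, the cancelling handle pair gives acyclicity of $X$ at once, and the identical argument over $\mathbb{Q}$ handles the rationally acyclic statement.
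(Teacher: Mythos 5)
Your outline follows Fickle's general strategy, but it goes wrong at exactly the point where the slice hypothesis has to enter, and the step you yourself flag as ``the main obstacle'' is the entire geometric content of the proof. The conceptual error: you assert that the final manifold $X$ is obtained from $W$ by attaching an (algebraically) cancelling $1$--handle/$2$--handle pair, and deduce $\widetilde H_*(X)=\widetilde H_*(W)=0$. But the manifold that actually carries the proof does not contain $W$ at all. The slice disk $D$ is not ``a disk across which a handle can be slid and then cancelled'' inside $W$; it must be \emph{carved out}: one passes to the complement $W'=W\setminus \nu(D)$ and attaches a single $2$--handle to $W'$. Trading a slice-framed $2$--handle along $b$ for a $1$--handle (the zero--dot exchange) is legitimate only when $b$ bounds a standard disk, i.e.\ when $b$ is unknotted --- which is precisely Fickle's original hypothesis, and precisely why he obtains a \emph{contractible} manifold while this theorem claims only an \emph{acyclic} one. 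Indeed, if your description were literally correct, then for $W=B^4$ and $b$ merely slice your $X$ would be simply connected (an algebraically cancelling pair attached to a simply connected manifold kills $\pi_1$, since the relator has exponent sum $\pm1$ in the free group $\mathbb{Z}$), hence contractible; the carving construction cannot deliver this, because $\pi_1(W\setminus\nu(D))$ is in general far from $\mathbb{Z}$. Consequently your homology argument rests on a false premise. The correct argument shows, via Mayer--Vietoris and the acyclicity of $W$, that $W'$ is a homology (respectively rational homology) $S^1\times D^3$, and then one must still know that the attaching circle of the final $2$--handle generates $H_1(W')$; the paper obtains this indirectly, from the fact that the resulting boundary is a homology sphere.

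Separately, the verification that the boundary comes out right --- that the $2$--handle cobordism attached to the surgered manifold $M$ has upper boundary equal to slice-framed surgery on $b$ in $\partial W$, which is $\partial W'$ --- is the heart of the proof. In the paper this is the chain of handle slides and cancellations using the two bands of $F$, their intersection, and the framing $s$ (its Figures for the proof of the theorem), organized as: present $M$, slide $b$'s band over the $-(s\mp1)$--framed meridian, attach a $0$--framed $2$--handle to $[0,1]\times M$, and simplify to recognize $\partial W'$; the acyclic filling is then $W'$ glued to this cobordism turned upside down. Your proposal defers all of this (``Adapting his moves\dots'', ``Granting this computation\dots''), so what remains is a plan whose key computation is missing and whose acyclicity argument is attached to the wrong $4$--manifold.
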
 
\begin{remark}
Notice that the normal bundle to the slice disk for $b$ has a unique trivialization and thus frames $b$.  The $F$--framing on $b$ is simply the difference between the framing of $b$ given by $F$ and the one given by the slicing disk.  
\end{remark}
\begin{remark}
Fickle \cite{Fickle} proved this theorem under the assumption that $\partial W$ was $S^3$ and $b$ was an unknot, but under these stronger hypothesis he was able to conclude that the homology sphere bounds a contractible manifold. 
\end{remark}
\begin{remark}
Fintushel and Stern conjectured, see \cite{Fickle}, the above theorem for $\frac{1}{k(s\pm 1)}$ Dehn surgery on $K$, for any $k\geq 0$. So the above theorem can be seen to verify their conjecture in the $k=1$ case. 

As noted by Fickle, if the conjecture of Fintushel and Stern is true then $\Sigma(2,3,12k+1)$ will bound an acyclic manifold for each $k\geq 1$ since they can be realized by $-1/2k$  surgery on the right handed trefoil knot, and this knot bounds a Seifert surface containing an unknot to which the surface gives framing $-1$.
\end{remark}
\begin{remark}\label{allowribbon}
Notice that if $b$ is as in the theorem, then the Seifert surface $F$ can be thought of as obtained by taking a disk around a point on $b$, attaching a 1--handle along $b$ (twisting $s$ times) and then attaching another 1--handle $h$ along some other curve. The proof of Theorem~\ref{main1} will clearly show that $F$ does not have to be embedded, but just ribbon immersed so that cutting $h$ along a co-core to the handle will result in a surface that is ``ribbon isotopic" to an annulus. By ribbon isotopic, we mean there is a 1-parameter family of ribbon immersions between the two surfaces, where we also allow a ribbon immersion to have isolated tangencies between the boundary of the surface and an interior point of the surface. (In the proof we will see that it is important that the handle attached along $b$ does not pierce the rest of the surface. We can only allow the handle $h$ to pierce the surface.)
\end{remark}

\begin{example}
Consider the $n$-twisted $\pm$ Whitehead double $W_n^\pm(K_p)$ of $K_p$ from Figure~\ref{exampleKp} (here the $\pm$ indicates the sign of the clasp in the double). In \cite{Cha07}, Cha showed that $K_p$ is rationally slice. That is $K_p$ bounds a slice disk in some rational homology $B^4$ with boundary $S^3$. (Notice that $K_1$ is the figure eight knot originally shown to be rationally slice by Fintushel and Stern \cite{FintushelStern84}.) Thus Theorem~\ref{main1} shows that $\frac{1}{n\pm 1}$ surgery on $W_n^\pm(K_p)$ bounds a rationally acyclic 4--manifold. This is easy to see as a Seifert surface for $W_n^\pm(K_p)$ can be made by taking an $n$-twisting ribbon along $K_p$ and plumbing a $\pm$ Hopf band to it. JungHwan Park noted that when $n=0$ these knots are rationally concordant to the unknot and hence $\pm1$ surgery on $W_0^\pm(K_p)$ bounds a rational homology ball. 
\begin{figure}[htb]

{\tiny
\begin{overpic}
{exampleKp}
\put(44.5, 51){\Large$-p$}
\put(48,6){\Large$p$}
\end{overpic}}
\caption{The rationally slice knot $K_p$.}
\label{exampleKp}
\end{figure}

Moreover, from Fickle's original version of the theorem, $\pm\frac12$ surgery on $W_0^\pm(K_p)$ bounds a contractible manifold. 

We can generalize this example as follows. Given a knot $K$, we denote by $R_m(K)$ the $m$-twisted ribbon of $K$. That is take an annulus with core $K$ such that its boundary components link $m$ times. We can now plumb  $R_{m_1}(K_1)$ and $R_{m_2}(K_2)$ by identifying a neighborhood $N_i$ of a point on $K_i$ in $R_{m_i}(K_i)$ with $[-1,1]\times [-1,1]$ so that $[-1,1]\times \{0\}$ is a neighborhood of the point on $K_i$, and then gluing $N_1$ to $N_2$ by interchanging the interval factors. Denote the boundary of this new surface by $P(K_1, K_2, m_1, m_2)$. If the $K_i$ are rationally slice then $\frac{1}{m_i\pm 1}$ surgery on $P(K_1, K_2, m_1, m_2)$ yields a manifold bounding a rationally acyclic manifold; moreover, if the $K_i$ are slice in some acyclic manifold, then the result of these surgeries will bound an acyclic manifold. 

The knots in Figure~\ref{exampleKp} give a good source for the $K_i$ in the construction above. Another good source comes from a result of Kawauchi \cite{Kawauchi09}. He showed that if $K$ is a strongly negative-amphichiral knot (meaning there is an involution of $S^3$ that takes $K$ to its mirror with reversed orientation), then the $(2m,1)$--cable of $K$ is rationally slice for any $m\not=0$. We can apply this to the knots in Figure~\ref{exampleKp} to obtain another family of rationally slice knots. 
\end{example}

{\bf Symplectic embeddings.} Another way to build examples of integral homology spheres that bound contractible manifolds is via the following construction. Let $K$ be a slice knot in the boundary of a contractible manifold $W$ (e.g. $W=B^4$), then $\frac{1}{m}$ Dehn surgery along $K$ bounds a contractible manifold. This is easily seen by removing a neighborhood of the slice disk from W (yielding a manifold with boundary the $0$ surgery on $K$) and attaching a 2--handle to a meridian of $K$ with framing $-m$. With this construction one can find examples of  $3$--manifolds modeled on not just Seifert geometry, for example $\Sigma(2,3,13)$ is the result of $1$  surgery on Stevedore's knot $6_1$, but also hyperbolic geometry, for example the boundary of the Mazur cork is the result of $1$  surgery on the pretzel knot $P(3,-3,-3)$, which is also known as $m({9}_{46})$. 
See Figure~\ref{fig:smooth} (we use the standard conventions to frame knots that run over $1$--handles, see \cite[Section~2]{Gompf}).
\begin{figure}[h!]
\begin{center}
  \includegraphics[width=16cm]{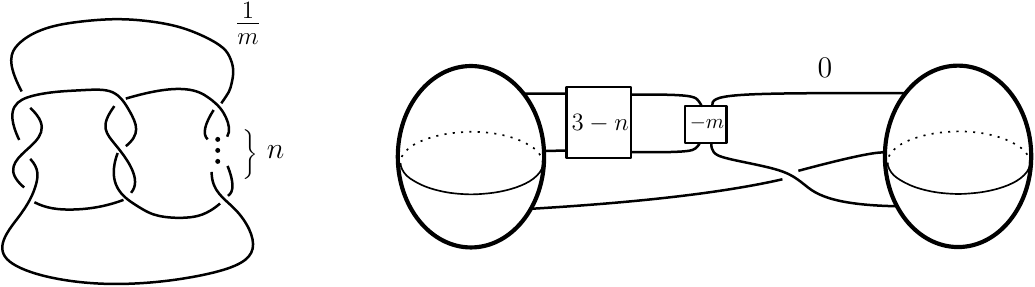}
 \caption{On the left is the 3-manifold $Y_{m,n}$ described as a smooth $\frac{1}{m}$ surgery on the slice knot $P(3,-3,-n)$ for $n\geq 3$ odd. On the right is the contractible Mazur-type manifold $W_{m,n}$ with $\partial W_{m,n}\cong Y_{m,n}$. Note the $m=1,~n=3$ case yields the original Mazur manifolds (with reversed orientation).}
  \label{fig:smooth}
\end{center}
\end{figure}

We ask the question of when $\frac{1}{m}$ surgery on a slice knot produces a contact $3$--manifold that bounds a {\it Stein contractible} manifold. Here there is an interesting asymmetry not seen in the smooth case. 

\begin{theorem}\label{regslice}
 Let $L$ be a Legendrian knot in $(S^3, \xi_{std})$ that bounds a regular Lagrangian disc in $(B^4, w_{std})$.  Contact $(1+\frac{1}{m})$  surgery on $L$ (so this is smooth $\frac{1}{m}$  surgery) is the boundary of a contractible Stein manifolds if and only if $m>0$.
\end{theorem}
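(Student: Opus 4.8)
The plan is to treat the two implications separately, after first pinning down the classical invariants of $L$. Since $L$ bounds a Lagrangian disc $D$ in $(B^4,\omega_{std})$, the Lagrangian adjunction formula forces $tb(L)=-1$ (and $rot(L)=0$); consequently contact $(1+\frac1m)$ surgery has smooth coefficient $tb(L)+1+\frac1m=\frac1m$, which justifies the parenthetical assertion. I would also record that, by the construction discussed just before the theorem, smooth $\frac1m$ surgery on the slice knot $L$ already bounds a contractible $4$--manifold $W_m=(B^4\setminus\nu(D))\cup_\mu h$ for every $m\neq 0$; the content of the theorem is therefore entirely about upgrading $W_m$ to a Stein domain, and about obstructing this when $m<0$.

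For the ``if'' direction ($m>0$) I would build a Weinstein structure on $W_m$ directly. The key input is that $D$ is a \emph{regular} Lagrangian disc: this guarantees that the complement $B^4\setminus\nu(D)$ carries a Weinstein structure whose convex boundary is $(S^3_0(L),\xi_0)$. In this contact manifold I would Legendrian realize the surgery meridian $\mu$ and attach a Weinstein $2$--handle. A Weinstein handle along a Legendrian $\mu$ produces smooth framing $tb(\mu)-1$, so realizing the required framing $-m$ amounts to finding a Legendrian representative of $\mu$ with $tb(\mu)=1-m$. The crux of this direction is the framing computation: one shows that the maximal Thurston--Bennequin invariant of $\mu$ (with respect to the surgery framing) is $0$, so such a Legendrian exists precisely when $1-m\le 0$, i.e. exactly when $m>0$. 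Attaching this Weinstein handle then gives a Weinstein, hence Stein, structure on $W_m$, which is contractible by the smooth construction above.

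For the ``only if'' direction ($m<0$) I would argue that $(Y,\xi_m)$ admits no contractible (indeed no homology--ball) Stein filling. The plan is a capping argument: describe the Weinstein cobordism that \emph{undoes} the surgery, namely a $2$--handle attached along the Legendrian dual knot $c\subset(Y,\xi_m)$ returning $(S^3,\xi_{std})$. If $X$ were a contractible Stein filling of $(Y,\xi_m)$, then $X\cup h$ would be a Stein filling of $(S^3,\xi_{std})$; by Eliashberg's uniqueness theorem every such filling is a blow--up of $B^4$, so its intersection form is negative definite. The key computation is that the framing of the capping handle equals $-m$, which is \emph{positive} when $m<0$, and a positively framed $2$--handle cannot sit inside a negative definite filling of $(S^3,\xi_{std})$. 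This contradiction rules out a contractible Stein filling for $m<0$, and tracking the framing sign is precisely what produces the asymmetry between the two cases.

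The main obstacle is this ``only if'' direction. Unlike the smooth embedding question, the $d$--invariant of $Y$ vanishes for both signs of $m$ (since $W_m$ is a smooth homology ball either way), so the obstruction must be genuinely symplectic rather than smooth. Making the capping argument rigorous requires two things I expect to be delicate: first, verifying that the reverse surgery is realized by a \emph{single} Weinstein $2$--handle with the asserted framing, which is where the regular Lagrangian disc is used to control the contact geometry of the dual knot; and second, invoking in the correct form the negative--definiteness of symplectic fillings of $(S^3,\xi_{std})$. Once the framing sign is correctly identified, both implications fall out, but that sign bookkeeping is the heart of the proof.
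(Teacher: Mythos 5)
Your ``only if'' direction ($m<0$) contains the serious gap. First, the cap you propose does not exist even smoothly once $m\le -2$: if $c$ is the dual knot (the core of the surgery solid torus) in $Y=S^3_{1/m}(L)$, the slope one must fill to recover $S^3$ is the old meridian of $L$, and that slope meets the meridian of $c$ exactly $|m|$ times; so returning to $S^3$ is a genuinely non-integral surgery on $c$, and no single $2$--handle attached along $c$ (Weinstein or not) can realize it. Second, the framing--sign mechanism cannot be where the asymmetry lives. A Stein (indeed any exact) filling of $(S^3,\xi_{std})$ is diffeomorphic to $B^4$ --- blow-ups are excluded because an exact symplectic manifold contains no closed symplectic $(-1)$--spheres --- and attaching $2$--handles to a contractible $X$ along null-homologous knots forces $H_2\neq 0$. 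So if a Weinstein cobordism from $(Y,\xi_m)$ up to $(S^3,\xi_{std})$ existed, you would get a contradiction \emph{regardless of the framing}, positive or negative; in particular, since contractible Stein fillings do exist for $m>0$, such caps must fail to exist for $m>0$ as well. The entire content of the ``only if'' direction is therefore the (non)existence of the cap, i.e.\ a genuine fillability obstruction, which your sketch never supplies. The paper gets this by quoting the theorem of Conway--Etnyre--Tosun: contact $r$ surgery on $L$ with $r\in(0,1]$ is strongly fillable iff $L$ is Lagrangian slice and $r=1$; for $m<0$ the coefficient $1+\frac1m$ lies in $[0,1)$, so there is no strong filling at all, much less a contractible Stein one.

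Your ``if'' direction has the same skeleton as the paper's argument for $m\ge 2$ (Weinstein $2$--handle on a stabilized Legendrian representative of the meridian $\mu$, attached to the Weinstein disk complement), but the step you call the crux --- ``the maximal Thurston--Bennequin invariant of $\mu$ relative to the surgery framing is $0$'' --- is asserted, not proved, and it is exactly where the difficulty sits. The obvious Legendrian representative of $\mu$ (a small unknot in a Darboux ball) has twisting $-1$, which after the Weinstein framing shift only realizes framings $\le -2$, i.e.\ $m\ge 2$; this is precisely why the paper must run a separate and quite different argument for $m=1$ (normalizing the CET handle presentation, smoothly blowing down $L$, and realizing the resulting left-handed full twist by Legendrian tangles, then verifying contractibility). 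Moreover, your claim is off by one: regularity of the disk means exactly that $B^4=\bigl(B^4\setminus\nu(D)\bigr)\cup\bigl(\text{Weinstein $2$--handle with co-core }D\bigr)$, and the attaching circle of that handle is a Legendrian representative of $\mu$ whose handle framing is the $0$--framing undoing the surgery, hence whose twisting is $+1$ relative to the surgery framing; so the maximum is at least $+1$, not $0$. The statement you actually need --- that a twisting-$0$ representative exists --- is true (stabilize the representative above once), but it is the real content of the theorem for $m=1$ and has to be proved, together with the identification of the resulting boundary contact structure as contact $(1+\frac1m)$ surgery; asserting ``one shows'' at this point begs the question.
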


This result points out an interesting angle on a relevant question in low dimensional contact and symplectic geometry: which compact contractible 4-manifolds admit a Stein structure? In \cite{MT:pseudoconvex} the second author and Mark found the first example of a contractible manifold without Stein structures with either orientation. This manifold is a Mazur-type manifold with boundary the Brieskorn homology sphere $\Sigma(2,3,13)$. A recent conjecture of Gompf remarkably predicts that no non-trivial Brieskorn homology sphere bounds an acyclic Stein manifold. As observed above $\Sigma(2,3,13)$ is the result of smooth $1$ surgery along the stevedore's knot $6_1$. The knot $6_1$ is not Lagrangian slice, and indeed if Gompf conjecture is true, then by Theorem~\ref{regslice} $\Sigma(2,3,13)$ can never be obtained as a smooth $\frac{1}{n}$ surgery on a Lagrangian slice knot for any natural number $n$. Motivated by this example, Theorem~\ref{regslice}, and Gompf's conjecture we make the following weaker conjecture.

\begin{conjecture}
No non-trivial Brieskorn homology sphere $\Sigma(p,q,r)$ can be obtained as smooth $\frac{1}{n}$ surgery on a regular Lagrangian slice knot.
\end{conjecture}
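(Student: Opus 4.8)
Since the final statement is recorded as a \emph{conjecture}, I present a strategy for attacking it and indicate where the genuine difficulty lies.

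\emph{Reduction via Theorem~\ref{regslice}.} First I would observe that the hypothesis already forces us into the Stein-fillable regime: a natural number $n$ satisfies $n>0$, so if $\Sigma(p,q,r)=S^3_{1/n}(L)$ for a regular Lagrangian slice knot $L$, then Theorem~\ref{regslice} produces a contractible Stein filling $W$ of $\Sigma(p,q,r)$, together with a distinguished Stein-fillable contact structure $\xi$ on the boundary. Thus the conjecture is equivalent to the assertion that no nontrivial Brieskorn sphere bounds a contractible Stein domain inducing such a $\xi$; in particular it is a special case of Gompf's conjecture, so the point is to produce a direct obstruction that exploits the surgery description rather than invoking Gompf. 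As bookkeeping I would record that a Lagrangian disc forces $tb(L)=-1$ by the adjunction inequality for Lagrangian fillings, so the contact coefficient $1+\tfrac1m$ indeed corresponds to smooth coefficient $-1+(1+\tfrac1m)=\tfrac1m$, and I would carefully pin down which orientation of $\Sigma(p,q,r)$ arises as $\partial W$.

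\emph{The orientation dichotomy.} The decisive structural fact about Brieskorn spheres is that only one orientation can bound a Stein domain: with its link-of-singularity orientation $\Sigma(p,q,r)$ carries the Milnor fillable contact structure, while the reversed orientation $-\Sigma(p,q,r)$ (a Seifert fibered space) admits no symplectically fillable tight contact structure. If the surgery produces $-\Sigma(p,q,r)$, I would finish immediately: the Stein filling $W$ would contradict the non-fillability of every tight structure on $-\Sigma(p,q,r)$ coming from the Seifert classification (Lisca--Stipsicz, Ghiggini). The substantive case is therefore the one in which $\partial W=\Sigma(p,q,r)$ with its canonical orientation and $\xi$ is forced to be (isotopic to) the Milnor fillable contact structure.

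\emph{Obstructing the contractible filling.} Here I would bring in Heegaard Floer theory, which is why I expect $\HFplus$ and $\HFred$ to play a role. Two complementary routes are available. The first is to use the Ozsv\'ath--Szab\'o contact invariant $c(\xi)\in\HFplus(-\Sigma(p,q,r))$ together with the grading inequality satisfied by the contact class of a Stein filling: for a contractible $W$ one has $\chi(W)=1$, $\sigma(W)=0$ and $c_1(W)$ torsion, so the filling inequality pins the Maslov grading of $c(\xi)$ to a single value; comparing this with the explicit computation of the contact invariant of Milnor fillable structures on Seifert fibered spaces (Ghiggini--Van Horn-Morris, Mark--Tosun) should be incompatible with $\HFred(\Sigma(p,q,r))\neq 0$ for all nontrivial $(p,q,r)$. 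The second route bypasses the contact invariant and appeals instead to the classification of minimal symplectic fillings of Milnor fillable contact structures on Brieskorn spheres: every such filling is obtained from the minimal resolution, equivalently a blow-up of the Milnor fiber, and these have $b_2>0$ for nontrivial $(p,q,r)$, directly contradicting $b_2(W)=0$. I would try to make at least one of these two arguments uniform in $(p,q,r)$.

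\emph{The main obstacle.} The hard cases are precisely the Brieskorn spheres for which every homology-cobordism invariant vanishes, such as $\Sigma(2,3,13)$: these do bound smooth contractible manifolds, so no smooth or homology-cobordism obstruction can distinguish them, and the entire obstruction must come from the Stein/contact refinement. Proving, uniformly over all $(p,q,r)$, that the Milnor fillable contact structure admits no filling with $b_2=0$ --- equivalently, that the contact-invariant grading constraint is genuinely violated in every case --- is exactly the open difficulty, and is essentially of the same depth as the relevant instance of Gompf's conjecture. This is why the statement is recorded as a conjecture rather than a theorem.
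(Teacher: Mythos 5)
The statement you were asked about is recorded in the paper as a \emph{conjecture}: the paper contains no proof of it, so there is nothing to compare your argument against step by step. Your framing is, however, exactly the paper's own: by Theorem~\ref{regslice}, a counterexample with $n>0$ would exhibit a nontrivial Brieskorn sphere as the boundary of a contractible Stein domain, so the conjecture is a consequence of Gompf's conjecture that Brieskorn spheres never bound acyclic Stein manifolds, and the genuinely hard cases are the families such as $\Sigma(2,3,12k+1)$ (e.g.\ $\Sigma(2,3,13)$, cf.\ \cite{MT:pseudoconvex}) where every homology cobordism invariant vanishes and smooth contractible fillings actually exist. Your closing admission that the uniform step is open and of essentially the same depth as Gompf's conjecture is correct, and it is the honest core of your writeup. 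One logical slip: the conjecture is \emph{implied by}, not equivalent to, the statement that no Brieskorn sphere bounds a contractible Stein domain, since a Brieskorn sphere bounding such a domain need not arise from $\frac1n$ surgery on a regular Lagrangian slice knot.

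That said, the two ``known facts'' your program leans on are not available, and one is false as stated. First, the orientation dichotomy: it is not true that $-\Sigma(p,q,r)$ never admits symplectically fillable tight contact structures --- Ghiggini's celebrated examples of strongly fillable contact structures without Stein fillings live on certain Brieskorn spheres with reversed orientation, and nonexistence results are known only in special cases (e.g.\ $-\Sigma(2,3,5)$, which admits no tight structure at all). So the case $\partial W=-\Sigma(p,q,r)$ cannot be ``finished immediately''; it is as open as the other case, and even there one would at best fall back on grading/$d$-invariant obstructions of the Mark--Tosun type, which is again the open uniform problem. Second, there is no classification of minimal symplectic fillings of Milnor fillable contact structures on general Brieskorn homology spheres as blow-ups of the minimal resolution or of the Milnor fiber; such classifications exist for lens spaces \cite{Lisca08}, for simple singularities (Ohta--Ono), and a few other families, but the blanket assertion that every such filling has $b_2>0$ is precisely the content of the conjecture, not an input one may quote. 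So your proposal should be read as a correct contextualization together with a program whose two decisive steps are both unproven --- consistent with the paper's decision to state this as a conjecture rather than a theorem.
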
     

On the other hand as in Figure ~\ref{fig:smooth} we list a family of slice knots, that are regular Lagrangian slice because they bound decomposable Lagrangian discs and by \cite{CET} decomposable Lagrangian cobordisms/fillings are regular. We explicitly draw a contractible Stein manifold  $X_{m,n}$ these surgeries bound in Figure~\ref{fig:stein}.

\begin{figure}[h!]
\begin{center}
  \includegraphics[width=12cm]{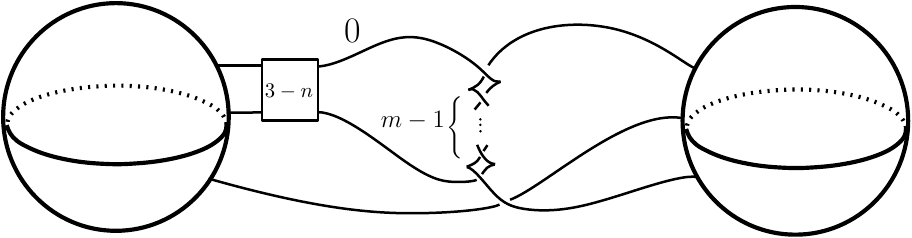}
 \caption{Stein contractible manifold with $\partial X_{m,n}\cong Y_{m,n}$.}
  \label{fig:stein}
\end{center}
\end{figure}

 A related embedding question is the following: when does a lens space $ L(p,q)$ embed in $\mathbb{R}^4$ or $S^4$? Two trivial lens spaces, $S^3$ and $S^1\times S^2$ obviously have such embeddings. On the other hand, Hantzsche in $1938$ \cite{Hantzsche} proved, by using some elementary algebraic topology that if a $3$--manifold $Y$ embeds in $S^4$, then the torsion part of $H_1(Y)$ must be of the form $G \oplus G$ for some finite abelian group $G$. Therefore a lens space $L(p,q)$ for $|p|>1$ never embeds in  $S^4$ or $\mathbb{R}^4$. For punctured lens spaces, however the situation is different. By combining the works of Epstein \cite{Epstein} and Zeeman \cite{Zeeman}, we know that, a punctured lens space $L(p,q)\setminus B^3$ embeds in $\mathbb{R}^4$ if and only if $p>1$ is odd. Note that given such an embedding a neighborhood of $L(p,q)\setminus B^3$ in $\mathbb{R}^4$ is simply $(L(p,q)\setminus B^3)\times [-1,1]$ a rational homology ball with boundary $L(p,q)\# L(p,p-q)$ (recall $-L(p,q)$ is orientation-preserving diffeomorphic to $L(p,p-q)$). 
 
 One way to see an embedding of $L(p,q)\# L(p,p-q)$ into $S^4$ is as follows: First, it is an easy observation that if $K$ is a doubly slice knot (that is there exists a smooth unknotted sphere $S\subset S^4$ such that $S\cap S^3=K$), then its double branched cover $\Sigma_2(K)$ embeds in $S^4$ smoothly. Moreover by a known result of Zeeman $K \#{-m(K)}$ is a doubly slice knot for any knot $K$ (here $-m(K)$ is the mirror of $K$ with the reversed orientation).  It is a classic fact that $L(p,q)$ is the double branched cover over the 2-bridge knot $K(p,q)$ (this is exactly where we need $p$ to be odd, as otherwise $K(p,q)$ is a link). In particular $L(p,q)\#L(p,p-q)$, being the double branched cover of the doubly slice knot $K(p,q)\#{-m(K(p,q))}$, embeds in $S^4$ smoothly. On the other hand, Fintushel and Stern \cite{FS:lensspace} showed this is all that could happen. That is they proved that $L(p,q)\#L(p,q')$ embeds in $S^4$ if and only if $L(p,q')=L(p,p-q)$ and $p$ is odd. (Previously Gilmer and Livingston \cite{GilmerLivingston} had shown this when $p$ was a prime power less than 231, though they also obstructed topological embeddings whereas Fintushel and Stern's work concerned smooth embeddings). In particular for $p$ odd, $L(p,q)\#L(p,p-q)$ bounds a rational homology ball in $\mathbb{R}^4$. A natural question in this case is to ask whether any of this smooth rational homology balls can be upgraded to be symplectic or Stein submanifold of $\mathbb{C}^2$. We prove that this is impossible.   

\begin{theorem}\label{main2}
No contact structure on $L(p,q)\#L(p,p-q)$ has a symplectic filling by a rational homology ball, assuming $p>1$ (that is $L(p,q)$ and $L(p,p-1)$ are not $S^3$). In particular, $L(p,q)\#L(p,p-q)$ cannot embed in $\mathbb{C}^2$ as the boundary of exact symplectic submanifold in $\mathbb{C}^2$. 
\end{theorem}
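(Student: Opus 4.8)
The plan is to reduce the statement to a question about the two lens space summands separately, and then to exploit the fact that symplectic (as opposed to smooth) rational homology ball fillings of a lens space are sensitive to orientation. Suppose, for contradiction, that some contact structure $\xi$ on $Y = L(p,q)\#L(p,p-q)$ admits a symplectic filling $W$ that is a rational homology ball. Since $Y$ bounds a symplectic filling, $\xi$ is tight by the Gromov--Eliashberg theorem; and since every tight contact structure on a connected sum of lens spaces is itself a connected sum of tight contact structures on the prime summands (by Honda's classification on lens spaces together with Colin's description of tight structures under connected sum), we may write $\xi = \xi_1 \# \xi_2$ with $\xi_1$ on $L(p,q)$ and $\xi_2$ on $L(p,p-q)$.

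First I would split the filling. The connect-sum $2$--sphere in $Y$ is a convex sphere, and a symplectic filling of a contact connected sum is obtained from symplectic fillings $W_1$ of $(L(p,q),\xi_1)$ and $W_2$ of $(L(p,p-q),\xi_2)$ by attaching a single symplectic $1$--handle along two feet on the convex boundary. Because such a $1$--handle only merges the two pieces without creating new homology in degree two, one computes $b_2(W) = b_2(W_1)+b_2(W_2)$ and $b_1(W) = b_1(W_1)+b_1(W_2)$. Hence $W$ being a rational homology ball forces both $W_1$ and $W_2$ to be rational homology balls. Recalling $L(p,p-q) = -L(p,q)$, we have produced symplectic rational homology ball fillings of both $L(p,q)$ and its orientation reverse $-L(p,q)$.

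The heart of the argument, and the step I expect to be the main obstacle, is to show this cannot happen for $p>1$. Smoothly there is no obstruction --- if $W_1$ fills $L(p,q)$ then $-W_1$ is a smooth rational ball bounding $-L(p,q)$ --- so the contradiction must be genuinely symplectic and must detect orientation. In particular it cannot come from the Heegaard Floer $d$--invariant package: lens spaces and their connected sums are $L$--spaces, the spin$^c$ structure carried by a filling already has vanishing $d$--invariant for purely smooth reasons, and the grading of the Ozsv\'{a}th--Szab\'{o} contact class recovers nothing beyond this. Instead I would appeal to Lisca's classification of the symplectic fillings of $(L(p,q),\xi)$: every such filling is, up to blow-up, diffeomorphic to one of an explicit list built from the negative continued fraction expansion of $p/q$, and one reads off that a filling can be a rational homology ball only when $p/q$ lies in the distinguished family $\mathcal R$ of ``linear plumbing'' rational balls. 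The remaining combinatorial task is to verify that ``$p/q\in\mathcal R$'' and ``$p/(p-q)\in\mathcal R$'' are incompatible for $p>1$; this encodes exactly the orientation asymmetry (for instance the rational-ball filling of $L(4,1)$ arising from the rational blow-down has no counterpart for $L(4,3)$).

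Finally, the ``in particular'' statement is immediate: an exact symplectic submanifold of $(\mathbb{C}^2,\omega_{std})$ with boundary $Y$ is in particular a symplectic filling of some contact structure on $Y$, and since a rational homology sphere embedded in $\R^4$ bounds a rational homology ball, this filling is a rational homology ball, contradicting the first assertion. The two points most in need of care are the rigor of the filling-splitting step for fillings that are merely strong (not necessarily Stein), where one must ensure the symplectic $1$--handle picture genuinely applies, and the orientation incompatibility within $\mathcal R$. An attractive alternative to the first difficulty is to invoke planarity of the relevant tight contact structures together with Wendl's theorem, which bounds $b_2$ of \emph{every} strong filling in terms of a fixed Lefschetz fibration supporting the contact structure and would directly preclude a rational homology ball filling.
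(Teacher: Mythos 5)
Your overall skeleton is the same as the paper's: split the filling along the reducing sphere, reduce to symplectic rational homology ball fillings of the two lens space summands, and get a contradiction from Lisca's classification of such fillings together with its orientation asymmetry (the final arithmetic you leave as a ``task'' is true and easy: $q=mh-1$ and $p-q=mk-1$ with $p=m^2$ forces $m\mid 2$, hence $p=4$ and $\{q,p-q\}=\{1,3\}$, and $3$ is not of the form $2k-1$ with $k$ coprime to $2$). However, there is a genuine gap at exactly the step you call the heart of the argument. Lisca's classification \cite{Lisca08} applies only to the standard \emph{universally tight} contact structure $\xi_{std}$ on $L(p,q)$; it says nothing about the (many) virtually overtwisted tight structures. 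Your $\xi_1,\xi_2$, produced by the Colin/Honda decomposition of an arbitrary tight $\xi$ on the connected sum, are merely tight, so invoking ``Lisca's classification of the symplectic fillings of $(L(p,q),\xi)$'' for them is a misapplication of the theorem. Ruling out rational homology ball fillings of the virtually overtwisted structures is precisely the content of Proposition~\ref{prop} in the paper, which is proved via Menke's JSJ-type decomposition theorem (a mixed torus in a virtually overtwisted lens space lets one split the filling along solid tori and recover it by a round $1$--handle, which is then shown to be impossible using planarity \cite{Schoenenberger05}, Etnyre's connected-boundary theorem \cite{Etnyre:planar}, and a homology computation). Without this ingredient — or citing the independent proofs of Fossati and Golla--Starkston — your argument leaves the virtually overtwisted case completely unaddressed, and it is a substantial case, not a formality.

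Two secondary points. First, the splitting step via Eliashberg's theorem \cite[Theorem~16.7]{CE} has two outcomes, not one: besides $X\cong X_1\natural X_2$, the filling could be $X'\cup(\text{1--handle})$ with $X'$ \emph{connected} and $\partial X'\cong L(p,q)\sqcup L(p,p-q)$. You never consider this case. In your specific setting it can be dismissed cheaply — attaching a $1$--handle with both feet on the boundary of a connected $X'$ gives $b_1(X)\geq 1$, contradicting $X$ being a rational homology ball — whereas the paper rules it out by the planarity/connected-boundary argument; either way, the case must be named and killed. Second, your closing suggestion that planarity plus Wendl's theorem ``would directly preclude a rational homology ball filling'' cannot be correct as stated: symplectic rational homology ball fillings of planar lens space contact structures do exist (e.g.\ the rational blow-down ball for $L(4,1)$), so no theorem about planar contact structures alone can forbid them; the obstruction genuinely needs the classification statement and the arithmetic of both summands simultaneously.
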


\begin{remark}
Donald \cite{Donald} generalized Fintushel-Stern and Gilmer-Livingston's construction further to show that for $L=\#_{i=1}^h L(p_i,q_i)$, the manifold $L$ embeds smoothly in $\mathbb{R}^4$ if and only if each $p_i$ is odd, and there exists $Y$ such that $L\cong Y\#-Y$. Our proof of Theorem~\ref{main2} applies to this generalization to prove none of the sums of lens spaces which embed in $\mathbb{R}^4$ smoothly can bound an exact symplectic manifold in $\mathbb{C}^2.$  
\end{remark}

To prove this theorem we need a preliminary result of independent interest.
\begin{proposition}\label{prop}
If a symplectic filling $X$ of a lens space $L(p,q)$ is a rational homology ball, then the induced contact structure on $L(p,q)$ is a universally tight contact structure $\xi_{std}$. 
\end{proposition}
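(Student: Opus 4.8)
The plan is to first locate $\xi$ in Honda's list of tight contact structures on $L(p,q)$ and then to isolate the universally tight ones using the Gompf $3$--dimensional homotopy invariant $d_3$. By the Gromov--Eliashberg theorem a symplectically fillable contact structure is tight, so $\xi$ is one of the tight structures classified by Honda. Writing $-p/q=[a_0,\dots,a_k]$ with all $a_i\le -2$, these are parametrized by rotation vectors $r=(r_0,\dots,r_k)$ with $r_i\equiv a_i\pmod 2$ and $|r_i|\le -a_i-2$, realized by Legendrian surgery on the corresponding chain of unknots; the universally tight structures are exactly the two coherent extremal choices $r=\pm\kappa$, where $\kappa_i=a_i+2$. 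Here $\kappa$ records the first Chern class evaluations of the canonical Stein structure on the negative--definite linear plumbing $X_0$ built on these Euler numbers, and $\kappa^{\mathsf T}Q^{-1}\kappa=K^2$, with $Q$ the intersection form of $X_0$ and $K$ its canonical class.

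Next I would compute $d_3(\xi)$ in two ways. Since $X$ is a rational homology ball, $\chi(X)=1$, $\sigma(X)=0$, and $c_1(X)$ is torsion (so $c_1(X)^2=0$ rationally); Gompf's formula $d_3(\xi)=\tfrac14\big(c_1^2-2\chi-3\sigma\big)$ then gives $d_3(\xi)=-\tfrac12$, the same value as for $(S^3,\xi_{std})=\partial B^4$. On the other hand, realizing the very same $\xi$ by its Stein filling on $X_0$, whose first Chern class evaluates as $c_1\cdot C_i=r_i$, the identical formula gives $d_3(\xi)=\tfrac14\big(r^{\mathsf T}Q^{-1}r-2\chi(X_0)-3\sigma(X_0)\big)$. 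As $\chi(X_0)=k+2$ and $\sigma(X_0)=-(k+1)$, equating the two computations collapses everything to the single numerical constraint $c_1(\xi)^2=r^{\mathsf T}Q^{-1}r=-(k+1)$.

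It then remains to show this constraint forces $r=\pm\kappa$. Because $X$ is in particular a smooth rational homology ball, Lisca's determination of which lens spaces bound rational homology balls shows $L(p,q)$ is one of the Wahl lens spaces $L(n^2,nd-1)$, whose defining string is a $T$--string, for which one has the identity $K^2=-(k+1)$. Moreover $-Q$ is an irreducible symmetric $M$--matrix (positive diagonal, $-1$ along the chain), so $Q^{-1}$ has all entries strictly negative; hence the form $r\mapsto r^{\mathsf T}Q^{-1}r$ attains its minimum over the allowed box uniquely at the coherent maximal corner $r=\pm\kappa$, with minimal value $\kappa^{\mathsf T}Q^{-1}\kappa=K^2=-(k+1)$. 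Thus $c_1(\xi)^2=-(k+1)$ says exactly that $r$ realizes this minimum, forcing $r=\pm\kappa$, i.e. $\xi$ universally tight.

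The delicate point is this last step, and in particular the necessity of the $T$--string identity $K^2=-(k+1)$. The bare equality $d_3(\xi)=-\tfrac12$ does \emph{not} imply universal tightness for an arbitrary lens space: on $L(9,1)=\partial X_0$ with string $[-9]$ the non--extremal choice $r_0=\pm 3$ also yields $c_1^2=-1$ and $d_3=-\tfrac12$, yet is not universally tight (consistently, $L(9,1)$ bounds no rational homology ball). So the heart of the matter is that possessing an \emph{honest} rational homology ball filling, via Lisca's classification, restricts us to $T$--strings, precisely where the $d_3=-\tfrac12$ level set of the plumbing form coincides with its minimum; marrying this arithmetic to the inverse--positivity of the plumbing lattice is the main obstacle to executing rigorously.
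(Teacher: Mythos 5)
Your proposal takes a genuinely different route from the paper. The paper never touches $d_3$ or Lisca: it observes that a virtually overtwisted structure on $L(p,q)$ contains a mixed torus in Menke's sense, applies Menke's JSJ-type splitting theorem to the filling, rules out the connected outcome using planarity of lens space contact structures together with Etnyre's theorem that fillings of planar contact manifolds have connected boundary, and then contradicts the rational homology ball hypothesis by Mayer--Vietoris. Within your scheme, the two computations of $d_3$ (yielding $r^{\mathsf T}Q^{-1}r=-(k+1)$), the identification of the universally tight structures with the coherent extremal rotation vectors $\pm\kappa$, and the uniqueness of the minimum of the form via entrywise negativity of $Q^{-1}$ are all correct, modulo reading ``symplectic filling'' as strong so that Gompf's formula applies (weak fillings of rational homology spheres can be deformed to strong ones).

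The genuine gap is in your invocation of Lisca. The set of lens spaces bounding \emph{smooth} rational homology balls is closed under orientation reversal (reverse the orientation of the ball), so Lisca's theorem gives $(p,q)=(m^2,mh\pm 1)$ with $\gcd(m,h)=1$, not only the Wahl family $(m^2,mh-1)$; the two halves are genuinely different oriented lens spaces. Your identity $K^2=-(k+1)$ holds only for the Wahl half. For the reversed half it fails outright: $L(4,3)=-L(4,1)$ bounds a smooth rational ball, its string is $[-2,-2,-2]$, and $K^2=0\neq -3$; similarly $L(9,7)$ has string $[-2,-2,-2,-3]$ with $K^2=-4/9\neq-4$. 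On that half your concluding step --- that the $d_3=-\tfrac12$ level set of the plumbing form coincides with its minimum --- is false as stated, so the argument does not close. It can be patched: for these dual-Wahl strings one expects $K^2>-(k+1)$ (true in the examples above), whence the constraint $r^{\mathsf T}Q^{-1}r=-(k+1)$ has \emph{no} solutions and the Proposition holds vacuously there, consistently with the fact (Lisca, Corollary~1.2(d)) that such oriented lens spaces admit no rational ball symplectic filling at all. But that requires an additional arithmetic lemma about the dual strings which you have not supplied, and you cannot shortcut it by citing Lisca's symplectic classification, since that result concerns only $\xi_{std}$ while the entire point of the Proposition is to exclude fillings of the virtually overtwisted structures. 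A smaller issue of the same kind: the identity $K^2=-(k+1)$ for the Wahl half is asserted rather than proved, though this one is a known, citable property of Wahl/T-strings.
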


\begin{remark}
Recall that every lens space admits a unique contact structure $\xi_{std}$ that is tight when pulled back the covering space $S^3$. Here we are not considering an orientation on $\xi_{std}$ when we say it is unique. On some lens spaces the two orientations on $\xi_{std}$ give the same oriented contact structure and on some they are different. 
\end{remark}

\begin{remark}
After completing a draft of this paper, the authors discovered that this result was previously proven by Golla and Starkston \cite[Proposition~A$.2.$]{GollaStarkston19pre}. Fossati \cite{Fossati19pre} had previously constrained the topology of fillings of virtually overtwisted contacts structures on a restricted class of lens spaces. As the proof we had is considerably different we decided to present it here.  
\end{remark}

%

\medskip
\noindent
{\bf Acknowledgements:} We are grateful to Agniva Roy for pointing out the work of Fossati and of Golla and Starkston. We also thank Marco Golla, JungHwan Park, and Danny Ruberman for helpful comments on the first version of the paper. We are also grateful to the referee for the careful reading of the paper and many helpful suggestions that greatly improved the paper. 
The first author was partially supported by NSF grant DMS-1906414. Part of the article was written during the second author's research stay in Montreal in Fall 2019. This research visit was supported in part by funding from the Simons Foundation and the 
Centre de Recherches Math\'ematiques, through the Simons-CRM scholar-in-residence program. The second author is grateful to CRM and CIRGET, and in particular to Steve Boyer for their wonderful hospitality. The second author was also supported in part by a grant from the Simons Foundation (636841, BT)

\section{Bounding acyclic manifolds}
We now prove Theorem~\ref{main1}. The proof largely follows Fickle's argument from \cite{Fickle}, but we repeat it here for the readers convince (and to popularize Fickle's beautiful argument) and to note where changes can be made to prove our theorem. 
\begin{proof}[Proof of Theorem~\ref{main1}]
Suppose the manifold $\partial W$ is given by a surgery diagram $D$. Then the knot $K$ can be represented as in Figure~\ref{TheKnot}. 
There we see in grey the ribbon surface $F$ with boundary $K$ and the curve $b$ on the surface. One may see this as follows: 
\begin{figure}[htb]

{\tiny
\begin{overpic}
{TheKnot}
\put(48, 68){\Large$D$}
\put(103, 15){$K$}
\put(62, 15){$b$}
\put(11, 10){$-s+1$}
\put(90, 34){$0$}
\end{overpic}}
\caption{The knot $K$ bounding the surface $F$ (in grey) in $\partial W$ represented by the diagram $D$. The two 1--handles of $F$ can interact in the box $D$ and have ribbon singularities as described in the Remark~\ref{allowribbon}. The 1--handle neighborhood of $b$ induces framing $s$ on $b$. The framings on $K$ are with respect to the framing coming from the surface $F$.}
\label{TheKnot}
\end{figure}
notice that since $b$ is primitive in homology, it is a non-separating curve in $F$, and thus there is an arc $a$ that intersects $b$ exactly once (and transversely) and has boundary on $\partial F$. If we cut $F$ along $a$ we see that the resulting surface is an annulus and what is left of $b$ runs from one boundary component to the other. Let $b'$ be the core of this annulus.  We can recover $F$ from this annulus by attaching a $1$--handle to undo the cut along $a$. In particular we see that the surface $F$ is can be built with a single $0$--handle and two $1$--handles. Up to isotopy we can take the $0$--handle to be a neighborhood of the intersection between $b$ and $b'$ and the $1$--handles to be thin neighborhoods of the parts of $b$ and $b'$ outside the $0$--handle. 
This establishes the claimed picture. 

The result of $\frac{1}{s-1}$ surgery on $K$ is obtained by doing $0$ surgery on $K$ and $(-s+1)$ surgery on a meridian as shown in Figure~\ref{TheKnot}. (The argument for $\frac{1}{s+1}$ surgery is analogous or can be seen by applying the argument below to $-W$ and the mirror of $K$.) 
Now part of $b$ is the core of one of the 1--handles making up $F$. So we can handle slide $b$ and the associated 1--handle over the $(-s+1)$ framed unknot to arrive at the left hand picture in Figure~\ref{Step1}. Then one may isotope the resulting diagram to get to the right hand side of Figure~\ref{Step1}. 
\begin{figure}[htb]

{\tiny
\begin{overpic}
{Step1}
\put(30, 26){\Large$D$}
\put(84, 26){\Large$D$}
\put(48, 11){$0$}
\put(100, 11){$0$}
\put(11, 6){$-s+1$}
\put(63, 6){$-s+1$}
\put(14.5, 17.5){$1$}
\end{overpic}}
\caption{On the left is the result of sliding $b$ and the 1--handle that is a neighborhood hood of $b$ over the $-s+1$ framed unknot. The right hand picture is obtained by an isotopy. (This is an isotopy of the knot, not the surface. The grey surface on the right is not isotopic to the one on the left.)}
\label{Step1}
\end{figure}
We now claim the left hand picture in Figure~\ref{Step2} is the same manifold as the right hand side of Figure~\ref{Step1}. To see this notice that the green part of the left hand side of Figure~\ref{Step2} consists of two 0-framed knots. Sliding one over the other and using the new 0-framed unknot to cancel the non-slid component results in the right hand side of Figure~\ref{Step1}.

Before moving forward we discuss the strategy of the remainder of the proof. The left hand side of Figure~\ref{Step2} represents the 3-manifold $M$ obtained from $\partial W$ by doing $\frac{1}{s-1}$ surgery on $K$. We will take $[0,1]\times M$ and attach a $0$-framed $2$--handle to $\{1\}\times M$ to get a 4-manifold $X$ with upper boundary $M'$. We will observe that $M'$ is also the boundary of $W$ with a slice disk $D$ for $b$ removed. Since $W$ is acyclic, the complement of $D$ will be a homology $S^1\times D^3$. Let $W'$ denote this manifold. Attaching $X$ upside down to $W'$ (that is attaching a 2--handle to $W'$) to get a 4--manifold $W''$ with boundary $-M$. Since $-M$ is a homology sphere, we can easily see that $W''$ is acyclic. Thus $-W''$ is an acyclic filling of $M$. 

Now to see we can attach the 2--handle to $[0,1]\times M$ as described above, we just add a 0-framed meridian to the new knot on the left hand side of Figure~\ref{Step2}. This will result in the diagram on the right hand side of Figure~\ref{Step2}.
\begin{figure}[htb]

{\tiny
\begin{overpic}
{Step2}
\put(32, 29){\Large$D$}
\put(87, 29){\Large$D$}
\put(50, 11){$0$}
\put(97, 11){$0$}
\put(45.5, 19){$0$}
\put(26.5, 17.5){$0$}
\put(11, 6.5){$-s+1$}
\put(70, 6){$-s+1$}
\end{overpic}}
\caption{The left hand side describes the same manifold as the right hand side of Figure~\ref{Step1}. The right hand side is the result of attaching a 0-framed 2--handle to the meridian of the new unknot. }
\label{Step2}
\end{figure}

We are left to see that the right hand side of Figure~\ref{Step2} is the boundary of $W$ with the slice disk for $b$ removed. To see this notice that the two green curves in Figure~\ref{Step2} co-bound an embedded annulus with zero twisting (the grey in the figure) and one boundary component links the $(-s+1)$ framed unknot and the other does not. Sliding the former over the latter results in the left hand diagram in Figure~\ref{Step3}. Cancelling the two unknots from the diagram results in the right hand side of Figure~\ref{Step3} which is clearly equivalent to removing the slice disk $D$ for $b$ from $W$. 
\begin{figure}[htb]

{\tiny
\begin{overpic}
{Step3}
\put(34, 31){\Large$D$}
\put(78, 31){\Large$D$}
\put(26, 3.5){$0$}
\put(26.5, 17.5){$0$}
\put(11, 3){$-s+1$}
\put(92, 6){$0$}
\end{overpic}}
\caption{The left hand side describes the same manifold as the right hand side of Figure~\ref{Step2}. The right hand side is the result of cancelling the two unknots from the diagram.}
\label{Step3}
\end{figure}
\end{proof}

\section{Stein fillings}
We begin this section by proving Theorem~\ref{regslice} concerning smooth $\frac 1 m$  surgery on a Lagrangian slice knot. 

\begin{proof}[Proof of Theorem~\ref{regslice}]
We begin by recalling a result from \cite{CET} that says contact $(r)$ surgery on a Legendrian knot $L$ for $r\in(0,1]$ is strongly symplectically fillable if and only if $L$ is Lagrangian slice and $r=1$. Thus $(1+1/m)$ contact surgery for $m<0$ will never be fillable, much less fillable by a contractible Stein manifold. 

We now turn to the $m>0$ case and start by a particularly helpful visualization of the knot $L$, here and below $L$ stands both for the knot type and Legendrian knot that bounds the regular Lagrangian disk $D$ in $B^4$. By \cite[Theorem~$1.9$, Theorem~$1.10$]{CET}, we can find a handle presentation of the $4$--ball $B^4$ made of one 0--handle, and $n$ cancelling Weinstein $1$-- and $2$--handle pairs, and a maximum Thurston-Bennequin unknot in the boundary of the 0--handle that is disjoint from $1$-- and $2$--handles such that when the $1$-- and $2$--handle cancellations are done the unknot becomes $L$. See Figure~\ref{Handle}. 
\begin{figure}[htb]

{
\begin{overpic}
{Handle2}
\put(56, 2){$L$}
\end{overpic}}
\caption{A Stein presentation for the 4--ball together with an ``unknot" labeled $L$. When the cancelling $1$-- and $2$--handles are removed, the knot becomes $L$. In this case $L$ is the pretzel knot $P(3,-3,-3)$.}
\label{Handle}
\end{figure}
Now smooth $1/m$ surgery on $L$ can also be achieved by smooth $0$ surgery (which corresponds to taking the complement of the slice disk $D$) on $L$ followed by smooth $-m$ surgery on its meridian. 

As the proof of Theorem~1.1 in \cite{CET} shows, removing a neighborhood of the Lagrangian disk $D$ that $L$ bounds from $B^4$, gives a Stein manifold with boundary $(+1)$ contact surgery on $L$ (that is smooth $0$ surgery on $L$). Now since the meridian to $L$ can clearly be realized by an unknot with Thurston-Bennequin invariant $-1$, we can stabilize it as necessary and attach a Weinstein $2$--handle to it to get a contractible Stein manifold bounding $(1+1/m)$ contact surgery on $L$ for any $m>1$. 

For the $m=1$ case we must argue differently. One may use Legendrian Reidemeister moves to show that in any diagram for $L$ as described above the $2$--handles pass through $L$ as shown on the left hand side of Figure~\ref{normalform}. 
\begin{figure}[htb]

{
\begin{overpic}
{NormalForm}
\put(74, 7.75){$-1$}
\end{overpic}}
\caption{The left hand diagram shows how the $2$--handles in a presentation of $L$ can be normalized. On the right is the result ``blowing down" $L$, that is doing smooth $1$ surgery on $L$ and then smoothly blowing it down. (The box indicates one full left handed twist.)}
\label{normalform}
\end{figure}
To see this consider Figure~\ref{isotonormalform}. There in the upper left we see a general picture for how the attaching spheres for the $2$--handles can run through $L$, here $\mathcal{T}$ is just some Legendrian tangle and the labeling $a, b, c,$ and $d$ mean there are $a$ strands entering from the top left, and similarly for the other labels. The next two diagrams in Figure~\ref{isotonormalform} show a Legendrian isotopy of one of the lower left strands. The lower left diagram indicates that the upper right diagram has moved one of the lower left strands to an upper right strand at the expense of changing the tangle $\mathcal{T}$. We can continue to get rid of all the lower left strands and similarly all of the lower right strands as well. This results in the middle diagram on the bottom row of Figure~\ref{isotonormalform}. Finally, one can isotopy the tangle $\mathcal{T}''$ out from the region bounded by $L$. This gives the claimed isotopy to the diagram on the left of Figure~\ref{normalform}.

\begin{figure}[htb]

{
\begin{overpic}
{Isotonormal}
\put(13.5, 36.5){$\mathcal{T}$}
\put(0, 32){$c$}
\put(0, 45){$a$}
\put(28.5, 32){$d$}
\put(28.5, 45){$b$}
\put(13.5, 8.75){$\mathcal{T'}$}
\put(-3, 4.25){$c-1$}
\put(0, 17.25){$a$}
\put(28.5, 4.25){$d$}
\put(25.5, 17.25){$b+1$}
\put(49.25, 8.75){$\mathcal{T''}$}
\put(94, 14.5){$\mathcal{T''}$}
\put(36, 17.25){$a+d$}
\put(69, 17.25){$a+d$}
\put(49.5, 38){$\mathcal{T}$}
\put(82.5, 38){$\mathcal{T}$}
\put(31, 34){$c-1$}
\put(34, 45){$a$}
\put(62.5, 32){$d$}
\put(62.5, 45){$b$}
\put(65, 34){$c-1$}
\put(68, 45){$a$}
\put(98, 32){$d$}
\put(99, 42){$b$}
\end{overpic}}
\caption{Isotopy to normalize the way strands run through $L$.}
\label{isotonormalform}
\end{figure}

Smoothly doing contact $(1+1/1)$--surgery on $L$ (that is smooth $1$ surgery) is smoothly equivalent to replacing the left hand side of Figure~\ref{normalform} with the right hand side and changing the framings on the strands by subtracting their linking squared with $L$.

Now notice that if we realize the right hand side of Figure~\ref{normalform} by concatenating $n$ copies of either diagram in Figures~\ref{sandz} (where $n$ is the number of red strands in Figure~\ref{normalform}) then the Thurston-Bennequin invariant of each knot involved in Figure~\ref{normalform} is reduced by the linking squared with $L$. We claim that this is a Stein diagram for the result of $(2)$ contact surgery on $L$. To see this, recall that $(2$) contact surgery is effected by $(1)$ contact surgery on $L$ followed by $(-1)$ contact surgery on a once stabilized copy $L'$ of $L$ (that is $L'$ is obtained from $L$ by translating slightly up in the front diagram and then stabilizing), see \cite{DG:surgery}. Now if one handle slides the strands running through $L$ on the left hand side in Figure~\ref{normalform} over $L'$, as in the third row of Figure~25 in \cite{Avdek13}, one may cancel $L$ and $L'$ from the diagram, resulting in a link that is Legendrian isotopic to the one described above. 
\begin{figure}[htb]

{
\begin{overpic}
{sandz}
\end{overpic}}
\caption{Legendrian representations for negative twisting.}
\label{sandz}
\end{figure}

Notice that the diagram clearly describes an acyclic 4--manifolds and moreover the presentation for its fundamental group is the same as for the presentation for the fundamental group of $B^4$ given by the original diagram. Thus the 4--manifolds is contractible. 
\end{proof}

We now turn to the proof that connected sums of lens spaces can never have acyclic symplectic fillings, but first prove Proposition~\ref{prop} that says any contact structure on a lens space that is symplectically filled by a rational homology ball must be universally tight. 
\begin{proof}[Proof of Proposition~\ref{prop}]
Let $X$ be a rational homology ball symplectic filling of $L(p,q)$. We show the induced contact structure must be  the universally tight contact structure $\xi_{std}$. This will follow from unpacking recent work of Menke \cite{menke2018jsjtype} where he studies exact symplectic fillings of a contact $3$--manifold that contains a {\it mixed torus}. 

We start with the set-up. Honda \cite{Honda:classification1} and Giroux \cite{Giroux:classification} have classified tight contact structures on lens spaces. We review the statement of Honda in terms of the Farey tessellation. We use notation and terminology that is now standard, but see see \cite{Honda:classification1} for details. Consider a minimal path in the Farey graph that starts at $-p/q$ and moves counterclockwise to $0$. To each edge in this path, except for the first and last edge, assign a sign. Each such assignment gives a tight contact structure on $L(p,q)$ and each tight contact structures comes from such an assignment. If one assigns only $+$'s or only $-$'s to the edges then the contact structure is universally tight, and these two contact structures have the same underlying plane field, but with opposite orientations. We call this plane field (with either orientation) the the universally tight structure $\xi_{std}$ on $L(p,q)$. All the other contact structures are virtually overtwisted, that is they are tight structures on $L(p,q)$ but become overtwisted when pulled to some finite cover. The fact that at some point in the path describing a virtually overtwisted contact structure the sign must change is exactly the same as saying a Heegaard torus for $L(p,q)$ satisfies Menke's mixed torus condition. 

\begin{theorem}[Menke]\label{mixed}
Let $(Y, \xi)$ denote closed, co-oriented contact $3$--manifold and let $(W, \omega)$ be its strong (resp. exact) symplectic filling. If $(Y,\xi)$ contains a mixed torus $T$, then there exists a (possibly disconnected) symplectic manifold $(W', \omega')$ such that:

\begin{itemize}
\item $(W', \omega')$ is a strong (rep. exact) symplectic filling of its boundary $(Y',\xi')$.
\item $\partial W'$ is obtained from $\partial W$ by cutting along $T$ and gluing in two solid tori. 
\item $W$ can be recovered from $W'$ by symplectic round $1$--handle attachment.  

\end{itemize}  
\end{theorem}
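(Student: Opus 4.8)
The plan is to establish the decomposition through a holomorphic-curve analysis in the symplectic completion of $(W,\omega)$, in the spirit of the neighborhood-adapted foliations developed by Hind, Hofer, and Wendl. The first step is to pin down a standard local model for the mixed torus. Since $T$ is mixed in the sense arising from the Honda--Giroux classification, a basic-slice decomposition shows that a neighborhood $N(T)\cong T^2\times[-1,1]$ carries a contact structure assembled from two basic slices of opposite sign meeting along $T$. Inside this neighborhood I would arrange, after a contact isotopy, a \emph{pre-Lagrangian} torus $L$, i.e.\ a torus whose characteristic foliation is linear by closed Reeb orbits; the opposite-sign condition defining ``mixed'' is exactly what guarantees such an $L$ together with a convenient adapted contact form.

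Next I would pass to the completion $\widehat{W}=W\cup([0,\infty)\times Y)$ and choose an almost complex structure $J$ that is cylindrical on the end, compatible with $\omega$, and tuned to the contact form on $N(T)$ so that the Reeb orbits foliating $L$ bound a clean family of $J$--holomorphic curves. The central analytic object is a moduli space $\mathcal{M}$ of finite-energy $J$--holomorphic cylinders asymptotic to these Reeb orbits. Working in real dimension four, I would invoke an automatic-transversality criterion to conclude that $\mathcal{M}$ is smooth of the expected dimension, and apply SFT compactness to show that the only degenerations are the controlled ones living in the local model. Positivity of intersections then forces the curves in $\mathcal{M}$ to be embedded and mutually disjoint, so their images sweep out an embedded hypersurface organizing a neighborhood of $L$ inside $\widehat{W}$ as a standard piece.

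With the foliation in hand, the geometric step is to cut $W$ along the separating hypersurface $H$ traced out by this family of curves. Capping the two resulting boundary components by the standard symplectic solid-torus pieces dictated by the local model yields $(W',\omega')$; by construction its boundary $(Y',\xi')$ is obtained from $(Y,\xi)$ by cutting along $T$ and gluing in two solid tori, and the capping is arranged so that $\omega'$ restricts to a strong (resp.\ exact) filling structure. Finally, reversing the cut identifies the region removed and reglued as a \emph{symplectic round $1$--handle} $S^1\times H^1$ attached along the two new boundary tori, recovering $W$ from $W'$; this reduces to checking that the Liouville and contact data on the attaching tori match the round-handle model, which is immediate from the explicit neighborhood normalization.

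I expect the main obstacle to be the holomorphic-curve package in the second step: proving compactness of $\mathcal{M}$ with no stray bubbling or breaking outside the local model, and upgrading the family of curves to a genuine foliation whose leaves cut the filling globally rather than only near $T$. The exactness (or strongness) of the filling is essential here, both for the energy bounds that power SFT compactness and for ruling out the closed holomorphic curves that could otherwise obstruct a clean decomposition.
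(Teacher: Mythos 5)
A framing point first: the paper does not prove this statement at all --- it is quoted, with attribution, from Menke \cite{menke2018jsjtype} and used as a black box --- so the only meaningful comparison is with Menke's own argument. Your outline is in the right genre (Menke's proof does involve a basic-slice normal form near $T$, a moduli space of holomorphic curves in a completion, compactness powered by the strong/exact hypothesis, and an identification of the swept-out region with a symplectic round $1$--handle), but as written it is a program rather than a proof: the two steps you yourself flag as ``the main obstacle'' --- compactness with all degenerations confined to the local model, and upgrading the local curve family to a global separating hypersurface --- are the entire analytic content of the theorem, and your sketch engages them only by naming standard tools. Note also that your construction gives no mechanism determining the \emph{meridional slopes} of the two glued-in solid tori; these are sharply constrained (they lie in $(r_3,r_1)$ and share a Farey edge with $r_2$), and this constraint is precisely what the present paper's proof of Proposition~\ref{prop} relies on to exclude $S^1\times S^2$ summands. ``Dictated by the local model'' does not produce it; it comes out of the specific degeneration analysis.

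There is also a concrete error that shows the sketch cannot be correct as stated: you claim the opposite-sign condition defining ``mixed'' is ``exactly what guarantees'' the pre-Lagrangian torus $L$. It is not. Pre-Lagrangian tori exist in any minimally twisting $T^2\times[-1,1]$ --- inside a single basic slice, a convex torus of appropriate slope can be perturbed to one --- with no sign condition whatsoever. Consequently your argument, as designed, never actually uses the mixed hypothesis, and an argument that never uses it proves too much: it would apply verbatim to a Heegaard torus in a universally tight lens space, where such tori admit pre-Lagrangian representatives but are \emph{not} mixed. Running your conclusion through the Mayer--Vietoris/$b_1$ argument in the paper's proof of Proposition~\ref{prop} would then rule out rational homology ball fillings of $(L(m^2,mh-1),\xi_{std})$ altogether, contradicting the fillings classified by Lisca \cite{Lisca08} that the paper invokes in the proof of Theorem~\ref{main2}. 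So the sign change must enter the holomorphic-curve analysis itself: in Menke's argument it is the opposite-sign configuration that forces the relevant family of curves to degenerate and split the filling, and it is exactly this that fails in a uniformly signed (universally tight) neighborhood. Any correct write-up needs an explicit step that breaks down when both basic slices have the same sign; your proposal has none. (A secondary technical issue pointing the same way: cylinders asymptotic to the Reeb orbits foliating a pre-Lagrangian torus are asymptotic to a Morse--Bott torus family, where automatic transversality and SFT compactness require genuine care, and such a family would sweep a hypersurface limiting onto $T$ rather than by itself producing the two solid tori with the required meridians.)
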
 

In our case we have $X$ filling $L(p,q)$. Suppose the contact structure on $L(p,q)$ is virtually overtwisted. The theorem above now gives a symplectic manifold $X'$ two which a round 1--handle can be attached to recover $X$; moreover, $\partial X'$ is a union of two lens spaces or $S^1\times S^2$. However, Menke's more detailed description of $\partial X'$ shows that $S^1\times S^2$ is not possible. We digress for a moment to see why this last statement is true. When one attaches a round 1--handle, on the level of the boundary, one cuts along the torus $T$ and then glues in two solid tori. Menke gives the following algorithm to determine the meridional slope for these tori. That $T$ is a mixed torus means there is a path in the Farey graph with three vertices having slope $r_1, r_2,$ and $r_3$, each is counterclockwise of the pervious one and there is an edge from $r_i$ to $r_{i+1}$ for $i=1,2$. The torus $T$ has slope $r_2$ and the signs on the edges are opposite. Now let $(r_3,r_1)$ denote slopes on the Farey graph that are (strictly) counterclockwise of $r_3$ and (strictly) clockwise of $r_1$. Any slope in $(r_3,r_1)$ with an edge to $r_2$ is a possible meridional slope for the glued in tori, and these are the only possible slopes. Now since our $r_i$ are between $-p/q$ and $0$ we note that if there was an edge from $r_2$ to $-p/q$ or $0$ then $r_2$ could not be part of a minimal path form $-p/q$ to $0$ that changed sign at $r_2$. Thus when we glue in the solid tori corresponding to the round 1--handle attachment, they will not have meridional slope $0$ or $-p/q$ and thus we cannot get $S^1\times S^2$ factors. 

The manifold $X'$ is either connected or disconnected. We notice that it cannot be connected because it is known that any contact structure on a lens space is planar \cite{Schoenenberger05}, and  Theorem~1.2 from \cite{Etnyre:planar} says any filling of a contact structure supported by a planar open book must have connected boundary. Thus we know that $X'$ is, in fact, disconnected. So $X'=X'_1\cup X'_2$ with $\partial X'_i$ a lens space. The Mayer--Vietoris sequence for the the decomposition of $X'$ into $X'_1\cup X'_2$ (glued along an $S^1\times D^2$ in their boundaries) shows that the first homology of $X_1'$ or $X_2'$ has rank 1, while both of their higher Betti numbers are 0. But now the long exact sequence for the pair $(X'_i,\partial X'_i)$ implies that $b_1$ must be 0 for both the $X_i'$. This contradiction shows that a symplectic manifold which is a rational homology ball and with convex boundary $L(p,q)$ must necessarily induce the universally tight contact structure on the boundary. 
\end{proof}

\begin{proof}[Proof of Theorem~\ref{main2}]
The statement about embeddings follows directly from the statement about symplectic fillings. To prove that result let 
 $X$ be an exact symplectic filling of $L(p,q)\# L(p,p-q)$ that is also a rational homology ball.  Since any contact structure on $L(p,q)\# L(p,p-q)$ is supported by a planar open book, \cite{Schoenenberger05}, Wendl's result \cite{Wendl10} tells us that $X$ can be taken to be a Stein filling of $L(p,q)\# L(p,p-q)$. Observe that there is an embedded sphere in $\partial X$ as it is reducible. Eliashberg's result in \cite[Theorem~$16.7$]{CE} says that $X$ is obtained from another Stein manifold with convex boundary by attaching a 1--handle. Thus $X\cong X_1\natural X_2$ where $X_1$ and $X_2$ are exact symplectic manifolds with $\partial X_1=L(p,q)$ and $\partial X_2=L(p,p-q)$ or $X\cong X'\cup (\text{1--handle})$ where $X'$ is symplectic 4-manifold with boundary $\partial X'\cong L(p,q)\sqcup L(p,p-q)$. 
 
As argued above in the proof of Proposition~\ref{prop} it is not possible to have $X'$ with disconnected boundary being lens spaces
and we must be in the case $X\cong X_1\natural X_2$; moreover, since $X$  is a rational homology balls, so are the $X_i$. Moreover, since $X_1$ and $X_2$ are symplectic filling of their boundaries, they induce tight contact structures on $L(p,q)$ and $L(p,p-q)$, respectively. 

Proposition~\ref{prop} says that these tight contact structures must be, which are unique up to changing orientation, universally tight contact structures $\xi_{std}$ on $L(p,q)$ and $\xi'_{std}$ on $L(p,p-q)$. Thus we have that $X_1$ and  $X_2$ are rational homology balls, and are exact symplectic fillings of $(L(p,q), \xi_{std})$, and $(L(p,p-q), \xi'_{std})$, respectively. In \cite[Corollary~1.2(d)]{Lisca08} Lisca classified all such fillings. According to Lisca's classification, symplectic rational homology ball fillings of  $(L(p,q), \xi_{std})$ are possible exactly when  $(p,q)=(m^2, mh-1)$ for some $m$ and $h$ co-prime natural numbers, and similarly for $(L(p,p-q), \xi'_{std})$ exactly when $(p,p-q)=(m^2, mk-1)$ for $m$ and $k$ co-prime natural numbers. Now simple calculation shows that, the only possible value for $m$ satisfying these equations is $m=2$. In particular, we get that $p=4$, but then we must have $\{q, p-q\}=\{1,3\}$, and 3 cannot be written as $2k-1$, for $k$ co-prime to $2$. Thus there is no such $X$. 
\end{proof}

\bibliography{references}
\bibliographystyle{amsplain}

\end{document}